\newtheorem{theorem}{Theorem}%[section]
\newtheorem{proposition}[theorem]{Proposition}
\newtheorem{lemma}[theorem]{Lemma}
\theoremstyle{definition}
\newtheorem{definition}[theorem]{Definition}
\newtheorem*{definition*}{Definition}
\theoremstyle{remark}
\newtheorem{remark}[theorem]{Remark}
\newcommand{\la}{\lambda}
\def\RR{\mathbb{R}}
\def\ZZ{\mathbb{Z}}
\def\TT{\mathbb{T}}
\newcommand{\cV}{{\mathcal V}}
\newcommand{\pd}{\partial}
\newcommand\minus\backslash
\newcommand{\abs}[1]{\left|#1\right|}
\newcommand\lan\langle
\newcommand\ran\rangle
\DeclareMathOperator\Div{div} 
\DeclareMathOperator\End{End}
\renewcommand\leq\leqslant
\renewcommand\geq\geqslant
\newlength{\intwidth}
 \DeclareMathOperator\curl{curl}
\DeclareMathOperator\Tr{tr}
\DeclareMathOperator\sym{Sym}
\DeclareMathOperator\id{Id}
\newcommand\vol{\mathrm{Vol}}
\newcommand\Ker{\mathrm{ker}}
\begin{document}

\title[Isolated steady solutions of 3D Euler]{Isolated steady solutions\\ of the 3D  Euler equations}

 %    Information for first author
 \author{Alberto Enciso}
 %    Address of record for the research reported here
 \address{Instituto de Ciencias Matem\'aticas, Consejo Superior de
   Investigaciones Cient\'\i ficas, 28049 Madrid, Spain}
 \email{aenciso@icmat.es}
 %    \thanks will become a 1st page footnote.

 \author{Willi Kepplinger}
 \address{Department of Mathematics, University of Vienna, Vienna, Austria}
 \email{willi.kepplinger@univie.ac.at}

  %    Information for second author
 \author{Daniel Peralta-Salas}
 \address{Instituto de Ciencias Matem\'aticas, Consejo Superior de
   Investigaciones Cient\'\i ficas, 28049 Madrid, Spain}
 \email{dperalta@icmat.es}

%%    General info
%\subjclass[2010]{35B38, 58J05, 58K45}
%\date{\today}
%
%\keywords{ }
%
\begin{abstract}
We show that there exist closed three-dimensional Riemannian manifolds where the incompressible Euler equations exhibit smooth steady solutions that are isolated in the $C^1$-topology. The proof of this fact combines ideas from dynamical systems, which appear naturally because these isolated states have strongly chaotic dynamics, with techniques from spectral geometry and contact topology, which can be effectively used to analyze the steady Euler equations on carefully chosen Riemannian manifolds. Interestingly, much of this strategy carries over to the Euler equations in Euclidean space, leading to the weaker result that there exist analytic steady solutions on~$\TT^3$ such that the only analytic steady Euler flows in a $C^1$-neighborhood must belong to a certain linear space of dimension six. For comparison, note that in any $C^k$-neighborhood of a shear flow there are infinitely many linearly independent analytic shears.
\end{abstract}

\maketitle

\section{Introduction}

In the study of steady states of the incompressible Euler equations, one often finds a subtle interplay between rigidity and flexibility properties, that is, between the existence of a wealth of steady Euler flows and the significant constraints that they must nonetheless satisfy. To elaborate on this idea, for the time being we shall focus on the two dimensional case, where our understanding is much more complete.

As is well known, in~$\RR^2$, the velocity field can be written in terms of the perpendicular gradient of the stream function as $v:=\nabla^\perp\psi=(\pd_2\psi,-\pd_1\psi)$, whereas in all dimensions the pressure is determined by the velocity through the formula $p:=-\Delta^{-1}\Div(v\cdot\nabla v)$. The steady Euler equations
\begin{equation}\label{E.Euler}
	v\cdot \nabla v=-\nabla p\,,\qquad \Div v=0\,,
\end{equation}
can then be equivalently written as
\begin{equation*}%\label{E.psiDepsi}
\nabla^\perp \psi\cdot\nabla\Delta\psi=0\,.
\end{equation*}

While this is generally not a well behaved equation from the point of view of the analysis of PDEs, it does certainly ensure the existence of many steady states. Specifically, on the plane~$\RR^2$ (or on~$\TT^2$ in the case of periodic conditions, with $\TT:=\RR/2\pi\ZZ$, or more generally on any two-dimensional Riemannian manifold), any function satisfying an elliptic equation of the form
\begin{equation}\label{E.DeltaF}
\Delta\psi = f(\psi)
\end{equation}
defines a steady Euler flow as $v:=\nabla^\perp \psi$. Note that not all steady states can be constructed this way, as the vorticity~$\Delta\psi$ and the stream function do not generally satisfy an equation of this form globally.

It is a truly infinite-dimensional feature of the Euler equations that steady solutions of the form~\eqref{E.DeltaF} are often embedded in rich families of solutions. Indeed, Constantin, Drivas and Ginsberg~\cite{CDG} have recently shown that, under suitable technical hypotheses, these solutions are structurally stable. Also, given a steady state on an annular domain for which the linearization of~\eqref{E.DeltaF} is positive definite, Choffrut and \v{S}ver\'ak~\cite{ChofS} completely characterized the nearby steady states  by showing that they are in one-to-one correspondence with their distribution functions, so that there exists a unique stationary solution on the corresponding orbit in the group of area-preserving diffeomorphisms.

On the rigidity side, it is known that, under suitable technical hypotheses, steady Euler flows must inherit the symmetries of their vessel, understood as a two-dimensional Riemannian manifold possibly with boundary. This is the case when the flow does not have any stagnation points if the domain is a disk, an annulus or a periodic channel~\cite{Hamel,Hamel1, Hamel2} or if the solution is compactly supported and satisfies certain sign conditions~\cite{gomez,R}. In the particular case of the periodic channel $\TT\times\RR$, for instance, this result ensures that the only steady Euler flows without stagnation points are shear flows $v(x):=(V(x_2),0)$. Similar results also hold when the steady flow
is dynamically stable, satisfies~\eqref{E.DeltaF} and the linearized operator is positive~\cite{CDG}.

Results such as~\cite{Hamel,Hamel1, Hamel2} ensure that steady Euler flows without stagnation points are isolated from non-shears in the $C^1$-norm. The dependence of this property on the absence of stagnation points and on the topology is remarkable~\cite{CZ,LZ}; for instance~\cite{CZ}, the vanishing shear flow $(\sin x_2,0)$ is not isolated from non-shears even in the analytic topology, while the shear flow $(x_2^2,0)$, which also has stagnation points, is isolated from non-shears in~$H^s$, $s>5$.

In three dimensions, the interplay between rigidity and flexibility is similar, but our understanding is much more limited. The steady Euler equations can be equivalently written as
\begin{equation}\label{E.Euler2}
v\times\curl v=\nabla F\,,
\end{equation}
where the so-called {\em Bernoulli function}~$F$ is arbitrary,
and the closest analog to~\eqref{E.DeltaF} is
\[
\curl v= f v\,,\qquad \Div v=0\,.
\]
Although these equations do have a rich space of solutions, and the fact that~$f$ is arbitrary provides some additional flexibility, whenever the function~$f$ is nonconstant the equations exhibit some surprising rigidity phenomena~\cite{rare,notsorare}. As a rule of thumb, in 3D, an advantage is that divergence-free fields can have    a richer behavior, and a drawback is that the equations are notoriously harder to analyze.

A recent survey by Drivas and Elgindi~\cite{DE} draws the attention to the fact that none of the known steady 2D Euler flows are isolated in~$C^1$. Note this is also true for the shear flows studied in~\cite{Hamel,Hamel1, Hamel2,CZ,LZ}, as one can find other shear flows in any $C^1$-neighborhood of the solution. In fact, \cite[Problem 1]{DE} is to show that no continuously differentiable steady 2D Euler flow is isolated in~$C^1$. In three dimensions the question is also wide open. We recall that isolated steady states are defined as follows:

\begin{definition*}
	A steady solution  $v_0$ of the incompressible Euler equations is {\em isolated in $C^1$}\/ if it is the only steady Euler flow in a $C^1$-neighborhood of~$v_0$, modulo the symmetries of the equations.
\end{definition*}

Our objective in this paper is to construct a steady 3D Euler flow which is isolated in~$C^1$. To grant us some leeway, we shall consider the equations on a closed (i.e., compact boundaryless) oriented manifold~$M$ of dimension~3 endowed with a Riemannian metric~$g$. Our main result is that indeed one can engineer this Riemannian manifold so that the Euler equations admit an isolated steady state:

\begin{theorem}\label{T.isolated}
There exists a closed $3$-dimensional Riemannian manifold $(M,g)$ of class~$C^\infty$ where the incompressible Euler equations have a smooth steady solution that is isolated in~$C^1$.
\end{theorem}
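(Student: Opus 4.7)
The plan is to take the isolated steady state $v_0$ to be the Reeb vector field of a contact Anosov flow on~$M$, normalized so that $\curl v_0=\lambda v_0$ for an isolated simple eigenvalue~$\lambda$ of the curl operator of a carefully chosen metric~$g$. Isolation will then follow from a two-step rigidity argument: the chaotic dynamics of~$v_0$ kill the Bernoulli function of any nearby steady solution, forcing it to be a Beltrami eigenfield; and the spectrum of $\curl$ then pins the solution down inside the one-dimensional eigenspace spanned by~$v_0$. The scaling symmetry $v\mapsto cv$ of the Euler equations reduces this line to a single orbit of symmetries.

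I would fix a closed oriented 3-manifold~$M$ carrying a contact Anosov Reeb flow, the standard model being $M=T^1\Si$, the unit tangent bundle of a closed hyperbolic surface with its canonical Liouville contact form~$\alpha$; the Reeb field here is the geodesic flow, which is Anosov by Anosov's classical theorem. Endowing $M$ with a Riemannian metric~$g$ adapted to $\alpha$ in the standard sense ($v_0^\flat=\alpha$ and $\mathrm{dvol}_g=\alpha\wedge d\alpha$) forces $\curl v_0=v_0$ automatically, so $v_0$ is a steady Euler flow with constant Bernoulli function. The next and hardest step is to refine~$g$ within the class of Reeb-adapted metrics so that the resulting curl eigenvalue~$\lambda$ (possibly no longer equal to~$1$) is \emph{simple} and \emph{isolated} in the spectrum. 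Since $\curl$ restricted to coexact vector fields is essentially self-adjoint with compact resolvent, its spectrum is discrete; one should break the spectral multiplicities generated by the large symmetry group of $T^1\Si$ by means of a small deformation controlled by Kato's analytic perturbation theory, using the $C^1$-openness of the Anosov condition to preserve the dynamical features of~$v_0$.

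Given a configuration $(M,g,v_0,\lambda)$ of this sort, let $v$ be a smooth steady Euler flow with $\|v-v_0\|_{C^1}$ sufficiently small. Structural stability of Anosov flows implies that $v$ is itself Anosov and topologically transitive, hence has a dense orbit. Its Bernoulli function $F$ satisfies $v\cdot\nabla F=0$ and is therefore constant along orbits, so by transitivity and continuity $F$ is constant; consequently $v\times\curl v=0$. Since $v$ is nonvanishing in a neighborhood of~$v_0$, this forces $\curl v=h\ms v$ pointwise for a scalar function~$h$, and taking divergence together with $\Div v=0$ yields $v\cdot\nabla h=0$. Another appeal to transitivity makes $h$ constant, and evaluating $\langle\curl v,v\rangle/|v|^2$ at any point where $v_0\neq 0$ shows $h$ is close to~$\lambda$. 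Simplicity and isolation of~$\lambda$ then force $h=\lambda$ and $v\in\myspan(v_0)$, so $v=c\ms v_0$ for some constant~$c$, which is identified with~$v_0$ modulo the scaling symmetry of the Euler equations.

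The main obstacle is the spectral engineering of the metric: homogeneous contact Anosov models like~$T^1\Si$ have large symmetry groups that typically produce eigenvalue degeneracies at any candidate Beltrami eigenvalue, so one must simultaneously maintain the Anosov character of~$v_0$, its Beltrami property, and simplicity of the corresponding eigenvalue under a carefully constructed deformation of~$g$. All other steps are robust once this is achieved, since the chaotic-dynamics input only requires $C^1$-stability and topological transitivity, both of which are open properties in the relevant topology.
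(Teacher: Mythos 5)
Your proof outline follows the same architecture as the paper: take a contact Anosov flow (the paper works abstractly with any Anosov Reeb flow, you specialize to $T^1\Sigma$), equip the manifold with an adapted metric so the Reeb field is a constant-factor Beltrami field, deform the metric within the adapted class to make the relevant curl eigenvalue simple, and then use $C^1$-stability of the Anosov property to show that every nearby steady Euler flow has constant Bernoulli function and constant proportionality factor, hence lies in the one-dimensional eigenspace. The small variation that you invoke topological transitivity (via structural stability) where the paper invokes ergodicity of volume-preserving Anosov flows is immaterial; both suffice to force continuous first integrals to be constant.

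However, the step you flag as the main obstacle — establishing that a suitable adapted metric makes the Beltrami eigenvalue simple — is where the actual work lies, and your proposal leaves it genuinely open. The remark that ``one should break the spectral multiplicities by a small deformation controlled by Kato's analytic perturbation theory'' does not close the gap: Kato tells you how eigenvalues branch along a given one-parameter family, but it does not produce the family. The difficulty is precisely that the perturbation must stay inside the constrained set $\mathcal{G}_\alpha$ of metrics compatible with $\alpha$ (equivalently, of the form $g_\xi+\alpha\otimes\alpha$ with Riemannian volume $\tfrac12\alpha\wedge d\alpha$); the classical unconstrained genericity results for metric deformations do not apply, and one must exhibit a concrete tangent direction $h$ that both preserves compatibility to first order and has nonzero projection onto the off-diagonal/trace-free part of the eigenspace splitting map. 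The paper does this by taking a second eigenform $\beta$, restricting attention to the open dense set where $\alpha$ and $\beta$ are not collinear, and defining
\begin{align*}
h \;=\; \beta_\xi\otimes\beta_\xi \;-\; \tfrac12\,|\beta_\xi|_g^2\, g_\xi,
\end{align*}
which is traceless (hence volume-preserving to first order), exponentiates to a genuine family inside $\mathcal{G}_\alpha$, and satisfies $\langle\beta,\delta_h(\star_g d)\beta\rangle_{L^2}-\langle\alpha,\delta_h(\star_g d)\alpha\rangle_{L^2}=\int_M|\beta_\xi|_g^4\,\mathrm{vol}_g>0$, thereby splitting the eigenvalue. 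Without a construction of this type, the simplicity claim — and hence the theorem — is not established.

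A secondary, smaller imprecision: it is not the Anosov character of $v_0$ that is in tension with the metric deformation (the Reeb field $R$ is fixed and remains Anosov independently of which compatible metric you choose, and every compatible metric keeps $R$ a Beltrami field), but rather the compatibility constraint itself. Identifying the correct constrained class $\mathcal{G}_\alpha$ and showing it is large enough to realize generic simplicity is the real technical content; framing the obstacle as ``simultaneously maintaining the Anosov character, the Beltrami property, and simplicity'' overstates the first two and understates the constraint that actually bites.
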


Roughly speaking, the result hinges on having a steady state with robust strongly chaotic dynamics. Thus we start off by letting~$v_0$ be an Anosov flow on~$M$ for which we can find a contact form whose Reeb field is precisely~$v_0$. This easily yields a Riemannian metric~$g$ where~$v_0$ is a Beltrami field (i.e., $\curl_g v_0=2v_0$), so that $v_0$ is, in particular, a steady Euler flow. By further tweaking the metric, one can ensure that 2~is a simple eigenvalue of the curl operator. To conclude, we then use the robust chaotic dynamics of the field to show that any $C^1$-small perturbation~$v$ of~$v_0$ that is a steady state must also satisfy the equation $\curl_g v=2v$, essentially as a consequence of the fact that~$v$ does not admit any (nonconstant) continuous first integrals. The theorem then follows from the simplicity of the eigenvalue. The proof of this theorem is given in Section~\ref{S.manifold}.  Interestingly, the most technically challenging part of the argument is the simplicity of the curl eigenvalue, so this point is presented separately in Section~\ref{S.simple}, and some parts of the proof are relegated to an Appendix. A direct consequence of the proof of Theorem~\ref{T.isolated} is that, in a suitably chosen metric, any Anosov Reeb flow on a 3-manifold defines a steady Euler flow which is isolated up to a finite-dimensional family (see Remark~\ref{R.Anosov}).

Much of this strategy carries over to the Euler equations on the usual Euclidean space setting, say on the flat torus $\TT^3:=(\RR/2\pi\ZZ)^3$. To see this, let $\cV$ be the space of eigenfields of the curl operator on~$\TT^3$ with eigenvalue~1, which is the smallest positive curl eigenvalue, see, e.g., ~\cite{PS}. It is well known that $\cV$ is a six-dimensional vector space, invariant under rigid motions. Moreover, it contains the family of {\em ABC flows}\/~\cite{AK}, i.e., the fields
\begin{equation}\label{E.uABC}
u_{ABC}(x):=\big(A\sin x_3+C\cos x_2,\; B\sin x_1+A\cos x_3,\; C\sin x_2+B\cos x_1\big)
\end{equation}
defined by arbitrary real constants $A,B$ and~$C$.

Our key observation is that one can identify a nonvanishing ABC flow~$u_0$ with positive topological entropy. Since this property is preserved by $C^1$-small perturbations and fields with  positive topological entropy cannot have a (nonconstant) analytic first integral, in Section~\ref{S.torus} we show how to establish the following weaker analog on~$\TT^3$ of Theorem~\ref{T.isolated}. For comparison, note that in a $C^k$-neighborhood of a shear flow, for any~$k$, there are infinitely many linearly independent analytic shears.

\begin{theorem}\label{T.ABC}
There exist ABC flows $u_0$ on $\mathbb T^3$ such that the only analytic steady Euler flows in a $C^1$-neighborhood of~$u_0$ belong to the six-dimensional space~$\cV$.
\end{theorem}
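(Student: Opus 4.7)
The plan is to implement the strategy sketched right before the statement. The dynamical input is to find parameters $(A,B,C)$ such that $u_0:=u_{ABC}$ is nowhere vanishing on $\TT^3$ and, at some hyperbolic periodic orbit, has a transverse homoclinic intersection. Such triples are known in the literature on Lagrangian chaos in ABC flows, for instance in a near-integrable regime treatable by a Melnikov-type analysis. The Smale--Birkhoff theorem then produces a compact hyperbolic invariant set on which a Poincar\'e return map is conjugate to a subshift of finite type with positive entropy, so $h_{\top}(u_0)>0$. Since hyperbolic invariant sets are $C^1$-structurally stable and nonvanishing is a $C^0$-open condition, there is a $C^1$-neighborhood $\cU$ of $u_0$ consisting of nowhere-vanishing divergence-free analytic fields, each carrying a persistent hyperbolic invariant set conjugate to the same subshift; in particular $h_{\top}(v)>0$ for every $v\in\cU$.

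The key intermediate step is that no $v\in\cU$ admits a nonconstant analytic first integral. Suppose $G$ were such an integral. By analyticity and compactness, $G$ has only finitely many critical values, so generic level sets $\{G=c\}$ are $v$-invariant smooth compact analytic $2$-surfaces. Any ergodic $v$-invariant probability measure $\mu$ has $G$ constant $\mu$-a.e.\ and is thus supported on a single level set. On a regular level, the restricted flow is a smooth flow on a compact $2$-manifold, whose topological entropy vanishes by a classical result (return maps to local transversals are diffeomorphisms of $1$-manifolds); on the finitely many singular levels the flow is still supported on a real-analytic variety of dimension at most $2$, and the same conclusion holds by stratification. The variational principle then gives $h_{\top}(v)=0$, contradicting the preceding paragraph, so $G$ must be constant.

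Now let $v\in\cU$ be analytic and a steady Euler flow. Its Bernoulli function $F$ from \eqref{E.Euler2} is analytic and satisfies $v\cdot\nabla F=0$, so it is an analytic first integral of $v$, and by the previous step it is constant. Thus $v\times\curl v\equiv 0$ on $\TT^3$, and since $v$ is nowhere zero the analytic function $f:=(v\cdot\curl v)/|v|^2$ is well defined on all of $\TT^3$ and satisfies $\curl v=f\,v$. Taking divergence yields $v\cdot\nabla f=0$, so $f$ is another analytic first integral of $v$, hence equal to some constant $\la\in\RR$. Therefore $v$ is a Beltrami eigenfield, $\curl v=\la v$. Because $\curl v\to\curl u_0=u_0$ in $C^0$ and $u_0$ is nowhere zero, $\la$ depends continuously on $v\in\cU$ with $\la=1$ at $v=u_0$; since the spectrum of $\curl$ on $\TT^3$ is discrete, shrinking $\cU$ further forces $\la=1$ throughout, so $v\in\cV$.

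The main obstacle is the dynamical input: rigorously exhibiting an ABC triple that is simultaneously nowhere vanishing and carries a $C^1$-persistent horseshoe, rather than merely positive topological entropy (which is not known to be $C^1$ lower-semicontinuous for flows in general). Once this is secured, the remainder of the argument is a clean combination of the variational principle with the classical vanishing of topological entropy for smooth surface flows and with the discreteness of the spectrum of $\curl$ on $\TT^3$.
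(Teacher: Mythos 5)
Your proposal follows essentially the same strategy as the paper, and the overall logical skeleton---persistent positive topological entropy $\Rightarrow$ no nonconstant analytic first integral $\Rightarrow$ Bernoulli function constant $\Rightarrow$ Beltrami with constant factor $\Rightarrow$ the factor is $1$ by discreteness of the curl spectrum---is exactly right. Two points of comparison are worth noting. First, the ``main obstacle'' you flag at the end (rigorously exhibiting a nowhere-vanishing ABC flow carrying a $C^1$-persistent transverse homoclinic intersection) is in fact settled by a reference the paper cites: Chicone~\cite{Chi} proved by Melnikov analysis that the ABC flow with $A=1$, $B\in(0,1)$, and $C>0$ small enough has a transverse homoclinic orbit, and for $C=0$, $B\in(0,1)$ one reads off directly that $u_0=(\sin x_3,\;B\sin x_1+\cos x_3,\;B\cos x_1)$ never vanishes, so nonvanishing persists for $C$ small; this is precisely the near-integrable regime you anticipated. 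Second, for the intermediate step that positive topological entropy excludes nonconstant analytic first integrals, the paper simply invokes \cite[Corollary 4.8.5]{GH}, whereas you reprove it via the variational principle together with the vanishing of topological entropy for smooth surface flows. Your argument is sound in spirit (and matches the standard proof), but the treatment of singular level sets by ``stratification'' is stated too briefly to be fully rigorous as written---one must justify that the restriction of the flow to a possibly singular real-analytic level variety still has zero topological entropy---and citing the textbook result is cleaner. Everything else, including the continuity-plus-discreteness argument forcing the Beltrami constant to equal $1$, matches the paper.
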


A minor comment is that, for the purposes of this result, there is nothing really special about ABC~flows. In fact, essentially the same argument shows that ``most'' Beltrami fields on~$\TT^3$ are isolated in~$C^1$ within the set of analytic steady Euler flows, up to the finite-dimensional family of Beltrami fields with the same curl eigenvalue. A precise statement of this fact is given in Remark~\ref{R.randomB} below.

\section{Anosov steady states: proof of Theorem~\ref{T.isolated}}
\label{S.manifold}

Key to proving Theorem~\ref{T.isolated} is the concept of Anosov flows. Recall~\cite{FH} that a vector field $X$ on a closed $3$-manifold $M$ is said to be \textit{Anosov} if there is a constant $0<\Lambda<1$ and line subbundles $E^s$ and $E^u$ of $T M$ such that $T M=E^s \oplus E^u \oplus \operatorname{span}\{X\}$, and the flow $\varphi_t$ of $X$ satisfies:

\begin{itemize}
\item  Invariance: $d \varphi_t\left(E^{s}\right)=E^{s}$ and $d \varphi_t\left(E^{u}\right)=E^{u}$ for every $t \in \mathbb{R}$.
\item Hyperbolicity: $|\left.d \varphi_t\right|_{E^s}| \leq \Lambda^t$ and $|\left.d \varphi_{-t}\right|_{E^u}| \leq \Lambda^t$ for all $t>0$. Here the operator norm $|\cdot|$ is induced by some Riemannian metric on $M$ (whose choice is irrelevant).
\end{itemize}

We shall consider Anosov flows that are also Reeb flows for some contact structure. Let $\alpha$ be a contact form on $M$, i.e., a $1$-form $\alpha$ such that $\alpha\wedge d\alpha>0$ on $M$. A Riemannian metric $g$ on $M$ is called \emph{compatible} with $\alpha$ if
$$\abs{\alpha}_g=1\,,\qquad \star_g d\alpha  = 2 \alpha\,,$$
where $\star_g$ is the Hodge star operator computed with the metric $g$, see, e.g.,~\cite{EKM}.

A contact manifold $(M,\alpha)$ defines a Reeb vector field $R$ as the unique solution to
$$i_Rd\alpha=0\,,\qquad \alpha(R)=1\,.$$
It is straightforward to check that for any compatible metric $g$ we have
$$g(R,\cdot)=\alpha(\cdot)\,,\qquad |R|_g=1\,.$$
It then follows that $R$ is a Beltrami field~\cite{PS} with constant factor $2$, and hence a steady state for the incompressible Euler equations on $(M,g)$.

The simplest example of an Anosov Reeb flow is the geodesic flow on the spherical cotangent bundle of a compact Riemannian surface of genus $>1$ with constant negative curvature, but this does not exhaust all the examples of Anosov Reeb flows~\cite{FoH}.

It is obvious that Theorem~\ref{T.isolated} is a consequence of the following result:

\begin{theorem}\label{T.Anosov}
Assume that a closed $3$-manifold $M$ admits an Anosov Reeb flow~$R$. Then there exists a Riemannian metric $g$ such that $R$ is a steady state of the incompressible Euler equations on $(M,g)$ and any other steady state $v$ that is $C^1$-close to $R$ is of the form $v=cR$ for some constant $c\in\RR$.
\end{theorem}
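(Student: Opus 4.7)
The plan is to fix a contact form $\alpha$ on $M$ whose Reeb field is the given Anosov flow $R$, and then choose a Riemannian metric $g$ compatible with $\alpha$. As recalled just before the theorem, compatibility automatically makes $R$ a Beltrami field with $\curl R = 2R$, and hence a steady Euler flow on $(M,g)$ with constant Bernoulli function. The crucial additional ingredient, which I would establish as announced in Section~\ref{S.simple}, is that $g$ can be chosen within the space of metrics compatible with $\alpha$ so that $2$ is a simple eigenvalue of the curl operator acting on divergence-free $L^2$ fields.

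The dynamical input is the $C^1$-openness of the Anosov property together with structural stability: any divergence-free field $v$ that is sufficiently $C^1$-close to $R$ is itself Anosov and, since $|R|_g=1$ everywhere, also nonvanishing. Because $\Div v = 0$, its flow preserves the Riemannian volume, yielding a volume-preserving Anosov flow on a closed $3$-manifold; such a flow is topologically transitive and therefore admits no nonconstant continuous first integral.

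Next I would plug these facts into the steady Euler equation $v \times \curl v = \nabla F$. Pairing with $v$ gives $v \cdot \nabla F = 0$, so $F$ is a first integral of $v$ and hence constant. Then $v \times \curl v \equiv 0$, and the nonvanishing of $v$ forces $\curl v = f v$ for some function $f$. Taking the divergence, and using $\Div v = 0$ together with $\Div \circ \curl = 0$, yields $v \cdot \nabla f = 0$, so $f$ is also a first integral and must be a constant $\lambda$. Thus $v$ is itself a Beltrami field with eigenvalue $\lambda$, and subtracting $\curl R = 2R$ from $\curl v = \lambda v$ gives
\[
(\lambda - 2) R = \curl(v - R) - \lambda(v - R).
\]
Since $|R|_g = 1$ pointwise and $\lambda = g(\curl v, v)/|v|_g^2$ is uniformly bounded in a $C^1$-neighborhood of $R$, this identity produces the estimate $|\lambda - 2| \lesssim \|v - R\|_{C^1}$.

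The final step is spectral. The curl operator on divergence-free fields of $(M,g)$ is self-adjoint with discrete real spectrum, so the simple eigenvalue $2$ is isolated by a positive spectral gap. For $v$ sufficiently $C^1$-close to $R$, the corresponding $\lambda$ lies inside this gap and must therefore equal $2$; simplicity then places $v$ in the one-dimensional eigenspace spanned by $R$, giving $v = cR$ as desired. The main obstacle in this strategy is exactly the simplicity of the curl eigenvalue $2$: arranging this within the restricted class of metrics compatible with $\alpha$ is a delicate spectral-geometric perturbation problem, which is the content of Section~\ref{S.simple} (and the appendix).
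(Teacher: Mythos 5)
Your proposal matches the paper's argument essentially step by step: choose a compatible metric with $2$ a simple curl eigenvalue (the content of Section~\ref{S.simple}), use $C^1$-structural stability of Anosov flows plus ergodicity/transitivity of volume-preserving Anosov flows to kill the Bernoulli function and then the proportionality factor $f$, and finally invoke the spectral gap to force $f=2$ and $v=cR$. The only cosmetic differences are that the paper cites ergodicity rather than topological transitivity (either suffices to rule out continuous first integrals) and does not write out the explicit estimate $|\lambda-2|\lesssim\|v-R\|_{C^1}$, but the logic is the same.
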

\begin{proof}
Let $(M,\alpha)$ be a contact $3$-manifold, and $R$ its associated Reeb field, which is Anosov by assumption. As explained above, any metric $g$ compatible with $\alpha$ makes $R$ a Beltrami field with constant factor $2$, i.e.,
\[
\curl_g R= 2R\,.
\]
By Theorem~\ref{thm: genericity}, there exists a compatible metric $g$ such that $2$ is a simple eigenvalue of $\curl_g$, and hence $R$ is the only Beltrami field with factor $2$ up to a constant multiple.

Consider a $C^1$-neighborhood $\mathcal B_\varepsilon(R)$ of $R$ and let $v\in \mathcal B_\varepsilon(R)$ be a steady solution of the incompressible Euler equations on $(M,g)$. If $\varepsilon>0$ is small enough, the stability of Anosov dynamics implies that $v$ is an Anosov flow as well~\cite{FH}. Since any Anosov volume preserving flow on a compact manifold is ergodic~\cite[Theorem 7.1.26]{FH}, any continuous first integral of $v$ must be constant on $M$. Accordingly, the Bernoulli function associated with $v$ must be constant, and $v$ then satisfies the equations
\[
\curl_g v=fv\,,\qquad \Div_g v=0\,,
\]
with factor
\[
f:=\frac{\curl_gv\cdot v}{|v|^2_g}\,.
\]
Of course, $|v|^2_g>0$ on $M$ because $v$ is $C^1$-close to the nonvanishing field $R$. Taking the divergence in the Beltrami equation above, we infer that
\[
\nabla_g f\cdot v=0\,,
\]
so $f$ is a first integral of $v$. Reasoning as above, we conclude that $f$ is constant on~$M$. Since $2$ is a simple eigenvalue of $\curl_g$, the $C^1$-closeness of $v$ to $R$ implies that $f=2$ and hence $v=cR$ for some constant $c\in\mathbb R$. This completes the proof of the theorem.
\end{proof}

\begin{remark}
A crucial property used in the proof of Theorem~\ref{T.Anosov} is the fact that Anosov flows are $C^1$-structurally stable (i.e., any $C^1$-close flow is topologically equivalent). Note that any volume preserving flow on a $3$-manifold that is $C^1$-structurally stable (or even robustly transitive, which is a weaker condition) is Anosov~\cite{BR,AM}, so the ideas in the proof of Theorem~\ref{T.Anosov} cannot be extended to deal with more general vector fields.
\end{remark}

\begin{remark}\label{R.Anosov}
Let $R$ be an Anosov Reeb flow on a closed $3$-manifold $M$. The proof of Theorem~\ref{T.Anosov} shows that~$R$ is a stationary solution of the Euler equations on $(M,g)$ for any compatible metric $g$, and it is isolated in $C^1$, up to a finite dimensional family of steady states, whose dimension is the multiplicity of the eigenvalue~2 of $\curl_g$. The most involved part of the proof of Theorem~\ref{T.Anosov} consists in proving that the compatible metric $g$ can be chosen so that the multiplicity is~$1$.
\end{remark}

\section{ABC flows: proof of Theorem~\ref{T.ABC}}
\label{S.torus}

%The spectrum of the curl operator on $\mathbb T^3$ is well known to be of the form (see e.g.~\cite{PS})
%\[
%\{\pm|k|: k\in\ZZ^3\backslash\{0\}\}\,,
%\]
%so the lowest positive eigenvalue is $1$, which has multiplicity $6$. The ABC flows consist in a $3$-dimensional family of curl eigenfields with eigenvalue $1$, its explicit expression in coordinates $(x,y,z)\in \TT^3$ is~\cite{AK}:
%\[
%u_{ABC}=(A\sin z+C\cos y)\partial_x+(B\sin x+A\cos z)\partial_y+(C\sin y+B\cos x)\partial_z\,,
%\]
%for arbitrary constants $(A,B,C)\in\RR^3$.

In~\cite{Chi} it was proven that an ABC flow~\eqref{E.uABC} with $A=1$, $B\in (0,1)$ and $C>0$ small enough, which we henceforth denote by $v_0$, exhibits a transverse homoclinic intersection, which implies that the field has positive topological entropy~\cite[Theorem 5.3.5]{GH}. We also observe that $v_0$ does not vanish at any point of~$\TT^3$. Indeed, when $A=1$ and $C=0$, the ABC flow takes the form
\[
(\sin x_3,\; B\sin x_1+\cos x_3,\; B\cos x_1)\,,
\]
which is never zero provided that $B\in (0,1)$. By continuity, this property holds if $C$ is small enough, depending on~$B$.

Let $v$ be an analytic steady state in a $C^1$-neighbourhood of $v_0$. By~\eqref{E.Euler2}, the Bernoulli function $F$ is the unique solution to the equation
\[
\Delta F= \Div(v\times \curl v)\,,
\]
up to an additive constant. Since $v$ is analytic, $F$ is analytic as well. As the existence of transverse homoclinic intersections is robust under $C^1$-small perturbations, we infer that $v$ also has positive topological entropy. Then by~\cite[Corollary 4.8.5]{GH} the steady state $v$ does not admit a nontrivial analytic first integral, so $F$ is necessarily a constant on $\TT^3$.

The previous discussion shows that $v$ is a Beltrami field with factor
\[
f:=\frac{v\cdot \curl v}{|v|^2}\,,
\]
which is an analytic function because $v$ is and $v$ does not vanish at any point of $\TT^3$ provided that it is close enough to $v_0$ (which is nonvanishing). As before, $f$ is then a constant on $\TT^3$, so necessarily $f=1$, so $v$ is a curl eigenfield with eigenvalue $1$, as we wanted to prove.

\begin{remark}\label{R.randomB}
Reference~\cite{EPR23} introduced the parametric Gaussian ensemble of random Beltrami fields $(u_\lambda)$  on $\TT^3$, where the corresponding curl eigenvalue~$\lambda$ ranges over the set of admissible eigenvalues (i.e., the set of square roots of integers that are congruent with $1$, $2$, $3$, $5$ or $6$ modulo~$8$, whose multiplicity~$N_\lambda$ tends to infinity for large $\lambda$). It was proved that, with a probability that tends to~$1$ as $\lambda\to\infty$, $u_\lambda$ has positive topological entropy. The proof of Theorem~\ref{T.ABC} then shows that, with a probability that tends to~1 as $\lambda\to\infty$ along the admissible eigenvalues, the random Beltrami field $u_\lambda$ is isolated in~$C^1$ within the class of analytic steady Euler flows, up to the $N_\lambda$-dimensional family of Beltrami fields with eigenvalue~$\la$.
\end{remark}

\section{Curl generic simplicity in the class of compatible metrics}
\label{S.simple}

Let $(M,\alpha)$ be a contact $3$-manifold. We denote by $\xi:=\Ker(\alpha)$ the contact structure defined by $\alpha$. Any Riemannian metric $g$ compatible with $\alpha$ has the orthogonal decomposition
\begin{align}\label{eq.compa}
    g=g_\xi + \alpha\otimes\alpha
\end{align}
where $g_\xi$ is a smooth degenerate quadratic form on $TM$ satisfying $g_\xi(R,\cdot)=0$ that is positive definite on $\xi$.
We denote the space of smooth metrics which are compatible with $\alpha$ by $\mathcal{G}_\alpha$ and consider it as a topological subspace of the space of smooth metrics $\mathcal{G}$ on $M$. The following lemma is elementary, see, e.g.,~\cite[Proposition 2.1]{EKM}:
\begin{lemma}\label{L:triv}
A metric $g$ of the form~\eqref{eq.compa} is in $\mathcal{G}_\alpha$ if and only if its associated Riemannian volume is equal to $\frac12\alpha\wedge d\alpha$.
\end{lemma}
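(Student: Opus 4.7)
The plan is to verify the equivalence pointwise using an adapted orthonormal frame built from the contact structure, noting first that the decomposition~\eqref{eq.compa} together with the condition $g_\xi(R,\cdot)=0$ automatically forces $|\alpha|_g=1$. Indeed, $g(R,\cdot)=g_\xi(R,\cdot)+\alpha(R)\alpha(\cdot)=\alpha(\cdot)$, so $R$ is the $g$-dual of $\alpha$ and $|\alpha|_g^2=g(R,R)=\alpha(R)^2=1$. Consequently the only remaining condition defining $\mathcal{G}_\alpha$ is $\star_g d\alpha=2\alpha$, and the whole proof reduces to showing that this identity is equivalent to the volume normalization.

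At any $p\in M$, I would pick a $g_\xi$-orthonormal basis $\{e_1,e_2\}$ of $\xi_p$; then $\{e_1,e_2,R\}$ is a $g$-orthonormal basis of $T_pM$ with dual coframe $\{e_1^\flat,e_2^\flat,\alpha\}$, which we orient so that $\mathrm{vol}_g=e_1^\flat\wedge e_2^\flat\wedge\alpha$. Using $i_R d\alpha=0$, the 2-form $d\alpha$ has no component involving $\alpha$, so necessarily
\begin{equation*}
d\alpha = h\, e_1^\flat\wedge e_2^\flat
\end{equation*}
for some smooth function $h$ on $M$. A direct computation then gives simultaneously
\begin{equation*}
\star_g d\alpha = h\,\alpha
\qquad\text{and}\qquad
\alpha\wedge d\alpha = h\,\mathrm{vol}_g.
\end{equation*}

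From these two identities the equivalence is immediate. If $g\in\mathcal{G}_\alpha$, then $\star_g d\alpha=2\alpha$ forces $h\equiv 2$, whence $\mathrm{vol}_g=\tfrac12\alpha\wedge d\alpha$. Conversely, if $\mathrm{vol}_g=\tfrac12\alpha\wedge d\alpha$, then $h\equiv 2$ and hence $\star_g d\alpha=2\alpha$, so $g\in\mathcal{G}_\alpha$. The main (and essentially only) point requiring care is the orientation bookkeeping: one must use the contact orientation $\alpha\wedge d\alpha>0$ consistently, both in defining the Hodge star and in choosing the frame $\{e_1,e_2,R\}$ to be positively oriented, so that $\mathrm{vol}_g$ and $\alpha\wedge d\alpha$ are compared with compatible signs. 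Beyond that, the argument is purely linear-algebraic and requires no further ingredients.
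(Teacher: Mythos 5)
Your argument is correct, and the local orthonormal-frame computation is the standard way to verify this equivalence. The paper itself does not spell out a proof but simply cites~\cite[Proposition~2.1]{EKM}; your write-up supplies precisely the pointwise linear-algebra argument behind that reference. The observation that $g(R,\cdot)=\alpha$ (hence $|\alpha|_g=1$) comes for free from the decomposition~\eqref{eq.compa}, which correctly reduces compatibility to the single condition $\star_g d\alpha=2\alpha$, and the identities $\star_g d\alpha = h\,\alpha$ and $\alpha\wedge d\alpha = h\,\vol_g$ in the adapted coframe are exactly right; your remark that the contact orientation must be used consistently when choosing the frame is the only point requiring care, and you handle it appropriately.
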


The main result of this section is the following theorem, which shows that for a generic choice of compatible metric, the eigenvalue $2$ of $\star_g d$ has multiplicity $1$:

\begin{theorem}\label{thm: genericity}
Let $(M,\alpha)$ be a contact $3$-manifold. There exists an open and dense subset $V$ of the space of compatible metrics $\mathcal G_\alpha$ such that for all $g\in V$ the eigenvalue $2$ of $\star_g d$ is simple.
\end{theorem}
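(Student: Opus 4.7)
The proof naturally splits into openness and density.

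Openness is routine: since $\alpha$ is an eigenform of $\star_g d$ with eigenvalue $2$ for every $g \in \cG_\alpha$, the eigenvalue $2$ always belongs to the spectrum. Restricted to the $L^2(g)$-orthocomplement of the closed 1-forms, $\star_g d$ is an elliptic self-adjoint operator with compact resolvent whose eigenvalues vary continuously with $g$. Hence if $2$ is simple at $g_0$, it remains simple for all $g \in \cG_\alpha$ close to $g_0$.

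For density, fix $g_0 \in \cG_\alpha$ and assume $E := \ker(\star_{g_0} d - 2)$ has dimension $k \geq 2$; the plan is to exhibit arbitrarily small perturbations in $\cG_\alpha$ that strictly reduce $k$, and iterate. By Lemma~\ref{L:triv}, every $g \in \cG_\alpha$ carries the same volume form $\mu = \frac12 \alpha \wedge d\alpha$, so the bilinear form $b(\omega,\omega') := \int_M \omega \wedge d\omega'$ on 1-forms is $g$-independent while $\langle \omega,\omega'\rangle_g := \int_M g^{-1}(\omega,\omega')\,\mu$ depends on $g$ only through the cometric. The eigenvalues of $\star_g d$ are the critical values of $b/\langle\cdot,\cdot\rangle_g$. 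Take a smooth curve $g_t \in \cG_\alpha$ with velocity $h = \dot g_0$. The constraints $g_t(R,\cdot)=\alpha$ and $\mu_{g_t} = \mu$ linearize to $h(R,\cdot)=0$ and $\Tr_{g_0|_\xi} h = 0$, so $h$ is a traceless section of $\sym^2\xi^*$. By Kato--Rellich analytic perturbation theory there are $k$ analytic eigenvalue branches $\lambda_j(t)$ with $\lambda_j(0)=2$, and $\dot\lambda_j(0)$ are the eigenvalues of the symmetric matrix
\[
M(h)_{ij} := 2\int_M h(u_i^\sharp, u_j^\sharp)\,\mu,
\]
where $\{u_i\}_{i=1}^k$ is a $g_0$-orthonormal basis of $E$ and $\sharp$ the $g_0$-musical isomorphism. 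Choosing $u_1 = \alpha / \|\alpha\|_{L^2(g_0)}$ (so $u_1^\sharp \propto R$), the relation $h(R,\cdot)=0$ forces $M(h)_{1j}=0$, consistent with $\lambda_1(t)\equiv 2$.

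Density then reduces to finding $h$ making the $(k-1)\times(k-1)$ block $(M(h)_{ij})_{i,j\geq 2}$ nonsingular: the resulting first-order splitting of $\lambda_2(t),\dots,\lambda_k(t)$ off from $2$ strictly decreases the multiplicity. Were this impossible for every admissible $h$, the linear image of $h \mapsto (M(h)_{ij})_{i,j\geq 2}$ would lie in the proper algebraic subvariety of singular symmetric matrices, hence in a proper linear subspace of $\sym^2\RR^{k-1}$. Its annihilator then yields a nonzero symmetric $C = (C_{ij})_{i,j\geq 2}$ with
\[
\int_M h(T_C)\,\mu = 0 \quad \text{for every traceless } h \in \Gamma(\sym^2\xi^*), \qquad T_C := \sum_{i,j\geq 2} C_{ij}\, u_i^\sharp \odot u_j^\sharp.
\]
Letting $h$ range over compactly supported test sections forces the projection of $T_C$ onto $\sym^2\xi$ to equal $\phi(x)\, g_0^{-1}|_\xi$ pointwise for some smooth function $\phi$.

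The main obstacle I anticipate is ruling out this rigid pointwise quadratic identity among the $\xi$-components of $u_2^\sharp,\dots,u_k^\sharp$. Because the $u_i$ are real-analytic (elliptic regularity for the Beltrami system $\star_{g_0} d u = 2 u$), the identity propagates to all of $M$, and one must exploit the specific structure of Beltrami eigenfields on the contact manifold---plausibly via unique continuation along Reeb orbits or a careful local analysis of the eigenforms at a point where $T_C$ can be normalized, combined with the $L^2$-orthogonality and linear independence of $u_2,\dots,u_k$ in $E \ominus \RR\alpha$---to derive a contradiction. This nondegeneracy step is where I expect the technical heart of the proof to lie.
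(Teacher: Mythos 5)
Your setup is correct and shares the paper's framework: you identify the admissible perturbations as traceless sections of $\sym^2\xi^*$ (the linearization of $\vol_{g_t}=\tfrac12\alpha\wedge d\alpha$), you correctly derive the first-order eigenvalue-splitting matrix $M(h)_{ij}=2\int_M h(u_i^\sharp,u_j^\sharp)\,\mu$, and you correctly observe that $h(R,\cdot)=0$ forces the first row and column to vanish, consistent with $\alpha$ remaining an exact eigenform. This is all in the paper (via Lemma~\ref{L:var} and the Colin de Verdi\`ere/Kato machinery from the Appendix).

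However, the proposal has a genuine gap, and moreover it aims at an unnecessarily strong target which is precisely what makes the gap hard to close. You try to make the full $(k-1)\times(k-1)$ block $(M(h)_{ij})_{i,j\geq 2}$ \emph{nonsingular}, so that multiplicity drops to $1$ in a single step. Arguing by contradiction, you arrive at a pointwise quadratic identity (a tensor $T_C$ proportional to $g_0^{-1}|_\xi$ pointwise) that you acknowledge you cannot rule out. That is indeed the step that would need to be filled in, and there is no reason to expect it to be tractable in this generality. What the argument actually needs is much weaker: one only has to show that $M(h)$ is \emph{not proportional to the identity}, since the Appendix framework places the set of metrics where the multiplicity is still $k$ inside $\pi^{-1}(\RR\cdot\id)$, a codimension-$1$ submanifold, and any $h$ with $\pi'(h)\notin\RR\cdot\id$ is transverse to it. A strict drop in multiplicity then follows, and one iterates finitely many times. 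Because $M(h)_{11}=0$ automatically, it suffices to exhibit a single $\beta\in E_2$ orthogonal to $\alpha$ and an admissible $h$ with $M(h)_{22}\neq 0$.

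The paper does exactly this with an explicit perturbation that you do not consider: taking $h=\beta_\xi\otimes\beta_\xi-\tfrac12|\beta_\xi|_g^2\,g_\xi$, one computes directly that $\Tr_g h=0$, that $h(R,\cdot)=0$ (so $M(h)_{11}=0$), and that
\[
M(h)_{22}=2\int_M h(\beta^\sharp,\beta^\sharp)\,\mu=\int_M|\beta_\xi|_g^4\,\mu>0,
\]
the positivity coming from the open-density of $\hat M=\{\alpha\wedge\beta\neq 0\}$ (Lemma~\ref{lem: noncollinearity}). The paper also exhibits the metric perturbation $g_\varepsilon\in\mathcal G_\alpha$ realizing $h$ as its velocity, which is a point your proposal does not address (you assume a curve $g_t$ in $\mathcal G_\alpha$ with prescribed velocity exists, but constructing it explicitly is part of the argument). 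In short: your route is not wrong in principle, but the contradiction you set up is harder than the problem, the needed nondegeneracy is left open, and the key simplification---a clever explicit $h$ built from a single orthogonal eigenform $\beta$, aimed at the weaker condition $\pi'(h)\notin\RR\cdot\id$---is missing.
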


We denote by $E_2$ the eigenspace of $\star_g d$ with eigenvalue $2$. We observe that $2$ is an eigenvalue (with a corresponding eigenform $\alpha$) for any compatible metric, so we will use the same notation $E_2$ independently of the particular compatible metric.\par

To prove Theorem~\ref{thm: genericity} one approach would be to show that $\mathcal{G}_\alpha$ is a Fr\'{e}chet manifold in its own right and to apply the techniques outlined in the Appendix. We will sidestep this technicality, viewing $\mathcal{G}_\alpha$ as a subset of the Fr\'{e}chet manifold $\mathcal{G}$ and argue in the following way: Given a compatible metric $g$ for $\alpha$, we know that $\alpha$ is contained in the eigenspace of $\star_{g} d$ associated to the eigenvalue $2$. Assume now that this eigenspace has dimension $k>1$. According to the method described in the Appendix we will then construct a codimension~1 Fr\'{e}chet-submanifold $S$ of an open set $\mathcal{U}(g) \subset\mathcal{G}$ containing $g$ such that $S$ contains all $g^\prime\in \mathcal{U}(g)$ for which the eigenvalue $\lambda=2$ of $\star_{g'}d$ has multiplicity $k$. Moreover we will construct a smooth $1$-parameter family of metrics $g_t$ with $g =g_0$ which is transverse to $S$ and stays strictly inside $\mathcal{G}_\alpha$. This proves that following $g_t$ will break up the $k$-dimensional eigenspace containing $\alpha$, thus strictly reducing its dimension, while also making sure that $\alpha$ is an eigenform for $\star_{g_t} d$ with eigenvalue $2$ for all $t$. It is clear that this idea can then be used to prove density of compatible metrics for which the eigenvalue $2$ is simple. These arguments are similar to the ones used in \cite{kepplinger2022spectral} and \cite{greilhuber2023arnolds}.

\subsection{Proof of Theorem~\ref{thm: genericity}}\label{subsection: genericity}

Let $V\subset \mathcal G_\alpha$ be the set of compatible metrics such that $2$ is a simple eigenvalue of $\star_g d$. By the continuous dependence of eigenvalues of $\star_g d$ on the Riemannian metric~\cite[Theorem 4.10]{GeraldTeschl2014} it is clear that $V$ is open, so it remains to prove that $V$ is dense in $\mathcal{G}_\alpha$. For this we want to find a perturbation within the space of compatible metrics that breaks up the multiplicity of the eigenvalue $2$.

Given $g\in \mathcal{G}_\alpha\setminus V$, the eigenspace $E_2$ has finite dimension $k>1$. We can thus pick $\beta$ in $E_2$ such that:
$$\int_M \alpha\wedge \star_g\beta=0\,\quad \text{and}\quad \int_M \beta\wedge \star_g\beta=1\,.$$
Let $\hat{M}:=\{p\in M : \alpha (p) \wedge \beta (p) \neq 0\}$ be the subset of $M$ on which $\alpha$ and $\beta$ are not collinear. The following lemma shows that is set is generic:

\begin{lemma}\label{lem: noncollinearity}
$\hat{M}$ is open and dense in $M$.
\end{lemma}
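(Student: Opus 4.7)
The plan is to handle openness as a triviality and then prove density by contradiction, the key reduction being a local identity that forces $\beta$ to be collinear with $\alpha$ on the purported open set, followed by a unique continuation argument that propagates this collinearity to all of $M$ and yields a contradiction with the normalization of $\beta$.

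Openness of $\hat M$ is immediate, since $\alpha\wedge\beta$ is a continuous section of $\Lambda^2T^*M$ and non-vanishing is an open condition. For density, I would suppose that $M\setminus\hat M$ has non-empty interior, take a connected component $U$ of that interior, and aim for a contradiction. Since $\alpha$ is nowhere zero, on $U$ I can write $\beta=h\alpha$ for a smooth function $h\colon U\to\RR$. Expanding $\star_g d\beta=2\beta$ via the Leibniz rule and the compatibility identity $\star_g d\alpha=2\alpha$ gives
\[
2h\alpha \;=\; \star_g(dh\wedge\alpha)+2h\alpha\,,
\]
so $\star_g(dh\wedge\alpha)=0$ and hence $dh\wedge\alpha=0$. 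This forces $dh=k\alpha$ for some smooth function $k$; differentiating and wedging with $\alpha$ gives $k\,\alpha\wedge d\alpha=0$, and the contact condition $\alpha\wedge d\alpha>0$ forces $k\equiv 0$. Thus $h$ is constant on the connected set $U$, say $h\equiv c$, and $\beta=c\alpha$ on $U$.

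To propagate this globally I would use that both $\alpha$ and $\beta$ satisfy $\star_g d(\cdot)=2(\cdot)$; applying $\star_g$ and then $d$ shows they are coclosed, and a short calculation then yields $\Delta_g\alpha=4\alpha$ and $\Delta_g\beta=4\beta$ for the Hodge Laplacian. Hence $\beta-c\alpha$ is a $\Delta_g$-eigenform vanishing on the open set $U$, and Aronszajn's strong unique continuation theorem, applied to the elliptic operator $\Delta_g-4$ on $1$-forms, yields $\beta=c\alpha$ on all of~$M$. Pairing against the orthogonality relation then gives $0=\int_M\alpha\wedge\star_g\beta=c\int_M\alpha\wedge\star_g\alpha$, so $c=0$ and $\beta\equiv 0$, contradicting $\int_M\beta\wedge\star_g\beta=1$.

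The main obstacle I anticipate is the unique continuation step, since Aronszajn's theorem must be invoked for sections of a vector bundle rather than for scalar functions. The reason it still applies is that the principal symbol of $\Delta_g$ is a scalar multiple of the identity, so in a local frame the eigenform equation becomes a diagonal second-order elliptic system to which a component-wise Aronszajn argument applies; this is the one place where an analytic input beyond the purely algebraic manipulations of the previous paragraph is required.
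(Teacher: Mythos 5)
Your argument is correct and follows the standard unique continuation strategy; the paper itself defers to \cite[Lemma 3.3]{kepplinger2022spectral}, where density is obtained by an argument of exactly this flavor, so you are on the expected route. The reduction showing that $h$ is locally constant on $U$ (from $dh\wedge\alpha=0$ to $dh=k\alpha$, then $k\,\alpha\wedge d\alpha=0$ and the contact condition forcing $k\equiv 0$) is precisely what upgrades the local collinearity to the vanishing of the global eigenform $\beta-c\alpha$ on an open set, which is the step many people overlook. One small imprecision worth fixing: the phrase ``a component-wise Aronszajn argument applies'' is misleading, since in a local frame the lower-order terms of $\Delta_g-4$ genuinely couple the components and the system is not diagonal; what actually applies is the Aronszajn--Krzywicki--Szarski theorem for elliptic \emph{systems with scalar principal symbol} (here $|\xi|^2\,\mathrm{Id}$), which tolerates arbitrary bounded lower-order coupling. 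With that restatement the proof is complete.
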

\begin{proof}
The proof of this result is part of the proof of~\cite[Lemma 3.3]{kepplinger2022spectral}.
\end{proof}

Denote by $\beta_\xi$ the projection of $\beta$ to $\xi$ along $R$, i.e.,
$$
\beta_{\xi}:=\beta-\beta(R)\alpha\,.
$$
Notice that $\beta_\xi$ is nowhere vanishing on $\hat{M}$. Let $\hat{\beta}_\xi:=\frac{\beta_\xi}{|\beta_\xi|_g}$ and a $1$-form $w$ such that
\begin{align}\label{eq.orth}
    (\alpha, \hat{\beta}_\xi,w)
\end{align}
is a positively oriented orthonormal coframe for $g$ on $\hat M$.
By construction,
\[
\alpha\wedge \hat{\beta}_\xi \wedge w=\vol_g=\frac12\alpha\wedge d\alpha\,.
\]
Without any loss of generality we shall assume that the total volume is normalized, i.e., $\int_M \vol_g=1$.

Consider the following perturbation of the metric $g$
%\begin{align*}
%g_\varepsilon := g + \varepsilon\, \hat{\beta}_\xi \otimes \hat{\beta}_\xi + \big[\big(1+\frac{\varepsilon^2}{4}\big)^{\frac{1}{2}} -\frac{\varepsilon}{2}- 1\big]\, g_\xi\,.
%\end{align*}
\begin{align*}
g_\varepsilon := g + \varepsilon\, \beta_\xi \otimes \beta_\xi + \big[\big(1+\frac{\varepsilon^2}{4}|\beta_\xi|_g^4\big)^{\frac{1}{2}} -\frac{\varepsilon}{2}|\beta_\xi|_g^2- 1\big]\, g_\xi\,.
\end{align*}
It is clear that this is a metric on $M$ of the form~\eqref{eq.compa}. To compute the Riemannian volume associated to $g_\varepsilon$ we consider the following coframe,
%\begin{align*}
%    \bigg(\alpha,\Big[\Big(1+\frac{\varepsilon^2}{4}\Big)^{\frac{1}{2}} +\frac{\varepsilon}{2}\Big]^{1/2} \, \hat{\beta}_\xi,\Big[\Big(1+\frac{\varepsilon^2}{4}\Big)^{\frac{1}{2}} -\frac{\varepsilon}{2}\Big]^{1/2}\,w\bigg)\,.
%\end{align*}
\begin{align*}
    \bigg(\alpha,\Big[\Big(1+\frac{\varepsilon^2}{4}\,|\beta_\xi|_g^4\Big)^{\frac{1}{2}} +\frac{\varepsilon}{2}\,|\beta_\xi|_g^2\Big]^{1/2} \, \hat{\beta}_\xi,\Big[\Big(1+\frac{\varepsilon^2}{4}\,|\beta_\xi|_g^4\Big)^{\frac{1}{2}} -\frac{\varepsilon}{2}\,|\beta_\xi|_g^2\Big]^{1/2}\,w\bigg)\,.
\end{align*}
It is easy to check that it is orthonormal with respect to $g_\varepsilon$. As before, it is defined on $\hat M$. Clearly
\[
\vol_{g_\varepsilon}=\vol_g
\]
on $\hat M$, and then on the whole of $M$ because $\hat M$ is open and dense. Therefore $g_\varepsilon$ is a metric compatible with $\alpha$ by Lemma~\ref{L:triv}.

Let us now consider the linearization of the perturbation $g_\varepsilon$ at the metric $g$, i.e.,
%\begin{align*}
%   h:=\frac{d}{d\varepsilon}\Big|_{\varepsilon =0}\, g_\varepsilon= \hat{\beta}_\xi \otimes \hat{\beta}_\xi - \frac{1}{2} \, g_\xi\,.
%\end{align*}
\begin{align*}
   h:=\frac{d}{d\varepsilon}\Big|_{\varepsilon =0}\, g_\varepsilon= \beta_\xi \otimes \beta_\xi - \frac{1}{2}\,|\beta_\xi|_g^2 \, g_\xi\,,
\end{align*}
The variation of the operator $\star_g d$ with respect to $h$ is given by the following lemma~\cite[Lemma 2.1]{Enciso2012}:

\begin{lemma}\label{L:var}
  Let $h \in \mathcal{T}^{0,2}$ be a smooth symmetric $(0,2)$ tensor field and let $\alpha_1$ be an eigenform of $\star_g d$ corresponding to the eigenvalue $\lambda$. The variation $\delta_h (\star_g d)$ of $\star_g d$ in direction $h$ satisfies
\begin{align*}
   g(\alpha_2^\sharp,\big(\delta_h (\star_g d) \alpha_1\big)^\sharp)= \lambda\,h(\alpha_2^\sharp, \alpha_1^\sharp)-\frac{\lambda}{2}(\Tr_g h)\, g(\alpha_2^\sharp,\alpha_1^\sharp)
\end{align*}
for any $1$-form $\alpha_2$. As usual, $\sharp$ denotes the metric dual with respect to $g$, and $\Tr_g h$ is the trace of the tensor $h$ computed with respect to the metric $g$.
\end{lemma}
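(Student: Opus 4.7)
The plan is to differentiate a metric-independent identity. Since the exterior derivative~$d$ does not depend on~$g$, we have $[\delta_h(\star_g d)]\alpha_1 = [\delta_h(\star_g)](d\alpha_1)$, so the task reduces to computing the variation of the Hodge star acting on $2$-forms. The key tool I would use is the pointwise identity
\[
\alpha_2 \wedge \omega \;=\; g(\alpha_2^\sharp,(\star_g \omega)^\sharp)\,\vol_g,
\]
valid on any oriented Riemannian $3$-manifold for any $1$-form $\alpha_2$ and $2$-form $\omega$; it follows from the defining relation $\alpha_2 \wedge \star_g(\star_g\omega) = g(\alpha_2^\sharp,(\star_g\omega)^\sharp)\vol_g$ of the Hodge star combined with $\star_g^2 = \id$ on $2$-forms. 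Its left-hand side is manifestly metric-independent, so differentiating both sides in the direction~$h$ must produce zero on the right-hand side.

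From there I would invoke the two standard variational formulas $\delta_h\vol_g = \tfrac12(\Tr_g h)\vol_g$ and, for any $1$-forms $\beta_1,\beta_2$ whose covector components do not depend on~$g$,
\[
\delta_h\bigl[g(\beta_1^\sharp,\beta_2^\sharp)\bigr] \;=\; -h(\beta_1^\sharp,\beta_2^\sharp),
\]
which follows at once from the identity $\delta_h g^{ij} = -g^{ik}h_{kl}g^{lj}$. Applying the latter with $\beta_1 = \alpha_2$ and $\beta_2 = \star_g\omega$ (whose sharp does vary through $\star_g$), using the product rule, and cancelling the common factor $\vol_g$, gives
\[
g(\alpha_2^\sharp,([\delta_h(\star_g)]\omega)^\sharp) \;=\; h(\alpha_2^\sharp,(\star_g\omega)^\sharp) - \tfrac12(\Tr_g h)\,g(\alpha_2^\sharp,(\star_g\omega)^\sharp).
\]

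The final step is to specialize $\omega = d\alpha_1$ and invoke the eigenvalue equation $\star_g d\alpha_1 = \lambda\alpha_1$, which converts both occurrences of $\star_g\omega$ on the right-hand side into $\lambda\alpha_1$; this yields precisely the identity stated in the lemma.

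I do not anticipate any serious obstacle, since the argument is essentially a one-line pointwise computation. The only real subtlety is careful bookkeeping of which objects do and do not vary with~$g$ through the musical isomorphism~$\sharp$: the covector fields $\alpha_1,\alpha_2,\omega$ are themselves fixed, but their sharps are not; this is exactly what produces the two distinct contributions on the right-hand side, one coming from the variation of the inverse metric and the other from the variation of $\star_g\omega$ itself.
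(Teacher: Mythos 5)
Your proof is correct, and every step checks out: the pointwise Hodge-star identity $\alpha_2 \wedge \omega = g(\alpha_2^\sharp,(\star_g\omega)^\sharp)\,\vol_g$ is valid on oriented $3$-manifolds (using $\star_g^2=\id$ on $2$-forms), the left side is manifestly metric-independent, and the standard variational formulas $\delta_h\vol_g = \tfrac12(\Tr_g h)\vol_g$ and $\delta_h g^{ij}=-g^{ik}h_{kl}g^{lj}$ are applied correctly, with the extra contribution from $\delta_h(\star_g\omega)$ properly isolated on the left.

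One point worth flagging: the paper does not actually prove Lemma~\ref{L:var}; it cites it as Lemma~2.1 of the reference [Enciso--Peralta-Salas, \emph{Trans. Amer. Math. Soc.}~364 (2012)]. The argument there is based on the same metric-independence idea, phrased through the $L^2$ pairing $\langle\beta,\star_g d\alpha\rangle_{L^2(g)}=\int_M\beta\wedge d\alpha$: differentiating that integral identity and observing the right side is metric-independent yields the formula, after which the pointwise statement follows by localization in $\alpha_2$. Your derivation is the pointwise version of the same computation and, if anything, is slightly cleaner since it bypasses the integration-and-localization step entirely. So: correct, and essentially the same route as the cited source, carried out at the level of $3$-forms rather than integrals.
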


Using the orthonormal coframe~\eqref{eq.orth}, we easily get that
$$\Tr_g h=0$$
on $\hat M$, and hence on the whole $M$. Applying Lemma~\ref{L:var} to our case, we immediately obtain:
%\begin{align*}
%    & g(\alpha,(\delta_h (\star_g d))\alpha) = 0\,,\\
%    & g(\beta,(\delta_h (\star_g d))\beta) = |\beta_\xi|_g^2\,.
%\end{align*}
\begin{align*}
    & g(\alpha,(\delta_h (\star_g d))\alpha) = 0\,,\\
    & g(\beta,(\delta_h (\star_g d))\beta) = |\beta_\xi|_g^4\,.
\end{align*}

Let $\mathcal U(g)$ be a $C^\infty$-neighborhood of the compatible metric $g$ in $\mathcal G$. Noticing that the spectrum of $\star_g d$ is discrete and the operator $\star_g d$ defines a $C^1$ function (in the sense of Definition \ref{def: C1}) on the space of metrics, arguing as in~\cite{greilhuber2023arnolds} one can build a $C^1$ function
\begin{align*}
    \pi: \mathcal{U}(g) \to  \sym(k,\mathbb R)\,,
\end{align*}
where $\sym(k, \mathbb R)$ is the space of $k\times k$ symmetric matrices with real entries. The definition of $\pi$ is rather involved and can be found in the aforementioned reference, but its essential properties are summarized in the Appendix. Crucially, the subset of metrics in $\mathcal{U}_g$ for which the $k$-dimensional eigenspace $E_2$ does not split up is contained in
$$\pi^{-1}(\mathbb R \cdot \id )\,.$$

Next, we claim that if there exists a variation of the metric $h\in T_g \mathcal{G}$ such that the derivative $\pi^\prime(h)$ of $\pi$ applied to $h$ is not contained in $\mathbb R \cdot \id $, then the set $\pi^{-1}(\mathbb R \cdot \id )$ is contained in a codimension $1$ Fr\'{e}chet-submanifold of some neighborhood of $g$ in $\mathcal{U}_g$. Indeed, set $v:=\pi^\prime (h)$ and take any linear projection $p_v: \sym(k,\mathbb R)\to \mathbb R \cdot v$ to the span of $v$ that contains $\mathbb R \cdot \id $ in its kernel (this obviously exists by the choice of $h$). The composition
\begin{align*}
    p_v\circ \pi: \mathcal{U}_g \to \mathbb R
\end{align*}
is a $C^1$ map in the sense of Hamilton whose zero set contains $\pi^{-1}(\mathbb R\cdot \id )$. Moreover, the derivative of $p_v\circ \pi$ is $p_v\circ \pi'$ so it follows by construction that $(p_v\circ \pi')(h)=v\neq 0$, so the map $p_v\circ \pi$ is a submersion. We can thus apply Proposition~\ref{proposition: submersion} in the Appendix to conclude that, after possibly restricting $\mathcal{U}_g$, all metrics in $\pi^{-1}(\mathbb R\cdot \id )$ are contained in a codimension $1$ Fr\'{e}chet submanifold of $\mathcal{U}_g$.

Now we prove that the variation $h$ we constructed above satisfies these requirements, i.e., $\pi^\prime (h)$ is not a multiple of the identity matrix. Extend the pair $\alpha$ and $\beta$ to an $L^2$-orthonormal basis $(u_i)_{i=1}^{k}$ of $E_2$ where $u_1 = \alpha$ and $u_2 = \beta$. Theorem~\ref{thm: general genericity theorem} tells us the expression of the map $\pi^\prime$:
\begin{align*}
    \pi^\prime: h\mapsto \big(\langle u_i, \delta_h (\star_g d) u_j \rangle_{L^2} \big)_{i,j}\,.
\end{align*}
If $\pi^\prime (h)$ were contained in the span of the identity matrix then, in particular,
$$
-\langle u_1, \delta_h (\star_g d) u_1 \rangle_{L^2}+\langle u_2, \delta_h (\star_g d) u_2 \rangle_{L^2}=0\,.
$$
Taking the variation $h$ we constructed above, we obtain
%\begin{align*}
%-\langle \alpha, \delta_h (\star_g d) \alpha \rangle_{L^2}+\langle \beta, \delta_h (\star_g d) \beta \rangle_{L^2} = \int_M |\beta_\xi |_g^2 \vol_g\,,
%\end{align*}
\begin{align*}
-\langle \alpha, \delta_h (\star_g d) \alpha \rangle_{L^2}+\langle \beta, \delta_h (\star_g d) \beta \rangle_{L^2} = \int_M |\beta_\xi |_g^4 \,\vol_g\,,
\end{align*}
which cannot be $0$ by Lemma~\ref{lem: noncollinearity}.

The previous arguments show that there exists a neighborhood $\mathcal{U}(g)$ of $g$ so that the set of compatible metrics in $\mathcal{U}(g)$ for which the eigenspace $E_2$ has dimension $k$ is contained in a Fr\'{e}chet submanifold $S$ of codimension $1$, and furthermore the $1$-parameter family of metrics $g_\varepsilon$ is transverse to $S$, that is $g_\varepsilon$ is not contained in $S$ for all $\varepsilon \neq 0$ provided $\lvert \varepsilon \rvert$ is small enough. Since the dimension of eigenspaces is lower semi-continuous, by perturbing in direction $h$ we therefore get a compatible metric $g^\prime$ for which $E_2$ has dimension strictly less than $k$, and after repeating the above argument finitely many times we end up with a $1$-dimensional eigenspace. As the perturbations are arbitrarily small, we get density of compatible metrics for which the eigenspace $E_2$ is simple.

\begin{remark}
It was proven in \cite{kepplinger2022spectral} that the curl operator satisfies spectral simplicity along generic $1$-parameter families of Riemannian metrics, and it is natural to ask whether the same holds in this setting. Using the example of Berger spheres, however, it is easy to see that generic simplicity is the best one can hope for in this setting.
\end{remark}

\section*{Acknowledgements}

The authors are very grateful to Theo Drivas for his interest in this work and useful remarks. This work has received funding from the European Research Council (ERC) under the European Union's Horizon 2020 research and innovation programme through the grant agreement~862342 (A.E.). It is partially supported by the grants CEX2023-001347-S, RED2022-134301-T and PID2022-136795NB-I00 (A.E. and D.P.-S.) funded by MCIN/AEI/10.13039/501100011033, and Ayudas Fundaci\'on BBVA a Proyectos de Investigaci\'on Cient\'ifica 2021 (D.P.-S.).

W.K. thanks his advisors Vera V\'ertesi and Michael Eichmair for their constant support and helpful mentoring as well as the Vienna School of Mathematics for providing a pleasant research environment. This research was funded in part by the Austrian Science Fund (FWF) [10.55776/P34318] and [10.55776/Y963]. For open access purposes, the author has applied a CC BY public copyright license to any author-accepted manuscript version arising from this submission.

\appendix

\section{Splitting of eigenvalues}
\label{A.splitting}

For families of self-adjoint operators whose spectrum consists of discrete eigenvalues of finite multiplicity there is a powerful tool, first introduced by Colin de Verdi\`{e}re~\cite{CdV1988} and developed further in~\cite{Teytel1999}, \cite{kepplinger2022spectral}, as well as in \cite{greilhuber2023arnolds}, to analyze the splitting behavior of multiple eigenvalues. We will outline the main idea of this technique after fixing some notation and defining the following notion of differentiability introduced by Hamilton~\cite{Hamilton1982}.

\begin{definition}\label{def: C1}
A map $F:\mathcal{F}\to \mathcal{F}^\prime$ between Fr\'{e}chet spaces $\mathcal{F}$ and $\mathcal{F}^\prime$ is called $C^1$ if the directional derivative
\begin{align*}
    DF(x)u = \lim_{t\to 0} \frac{F(x + tu) - F(x)}{t}
\end{align*}
exists for all $x \in \mathcal F$, $u \in T_x\mathcal F$ and, furthermore, $DF(x)u$ is linear in $u$ and jointly continuous in $x$ and $u$.
\end{definition}

Let $\mathcal{X}$ be a Fr\'{e}chet manifold, $H$ a Banach space, $\left\{\langle \,\cdot\,,\,\cdot\,\rangle_q\right\}_{q\in \mathcal{X}}$ a $C^1$ family of inner products on $H$, and $\{A_q\}_{q\in \mathcal{X}}$ a $C^1$ family of unbounded operators from some fixed dense domain $D\subset H$ to $H$, each self-adjoint with respect to $\langle \,\cdot\,,\,\cdot\,\rangle_q$ and with compact resolvent. Suppose that $\lambda$ is an eigenvalue with multiplicity $k$ of $A_{q_0}$ for some $q_0 \in \mathcal X$.\par
The compactness of the resolvent of $A_{q_0}$ implies that $\lambda$ is isolated and we can encircle it by some simple closed loop $\gamma$ in $\mathbb C$ which does not enclose any other eigenvalues of $A_{q_0}$. By the variational characterization of eigenvalues of self-adjoint operators \cite{GeraldTeschl2014}, the eigenvalues of $A_q$ move ``continuously'' with respect to the parameter $q$ in the sense that there exists a neighborhood $\mathcal{U}(q_0)$ of $q_0$ such that the number of eigenvalues of $A_q$, when counted with multiplicity, encircled by $\gamma$ is exactly $k$ for all $q\in \mathcal{U}(q_0)$. One can associate to $\gamma$ a spectral projection $P_\gamma (q)$
\begin{align*}
    P_\gamma(q):=\frac{1}{2\pi i}\int_\gamma (z - A_q)^{-1} dz\,,
\end{align*}
which maps the Banach space $H$ to the sum of eigenspaces associated to the eigenvalues of $A_q$ encircled by $\gamma$. We will denote the image set of $P_\gamma (q)$ by $E(q)$. The fact that the family of operators is $C^1$ implies that $P_\gamma$ is $C^1$ as well, see the proof of \cite[Lemma C.1]{greilhuber2023arnolds}. One can restrict $P_\gamma (q)$ to a linear bijection
\begin{align*}
P_\gamma (q)\vert_{E(q_0)} : E(q_0) \to E(q)\,,
\end{align*}
which is invertible, and this family of inverses $(q\mapsto P_\gamma (q) \vert_{E(q_0)}^{-1})$ is $C^1$ as well \cite[Theorem II.3.1.]{Hamilton1982}. Using this we can build the $C^1$ function
\begin{align*}
\tilde{\pi}=(P_\gamma \vert_{E(q_0)})^{-1} \circ A_{(\,\cdot\,)} \circ P_\gamma\vert_{E(q_0)} : \mathcal{U}(q_0) \to \End (E(q_0))\,.
\end{align*}
With some more work $\tilde{\pi}$ can be modified to a different map $\pi$ such that $\pi(q)$ is a symmetric endomorphism on $E(q_0)$ with respect to $\langle \,\cdot\,,\,\cdot\,\rangle_{q_0}$ for all $q\in \mathcal{U}(q_0)$. This involves a parametric version of the Gram-Schmidt orthonormalization procedure and is done in detail in \cite[Section 2.1]{greilhuber2023arnolds}.
The final result is a map
\begin{align*}
\pi: \mathcal{U}(q_0)\to \sym(E(q_0))\cong \sym(k,\mathbb R)\,,
\end{align*}
where we denote the symmetric endomorphisms of $E(q_0)$ with respect to $\langle \,\cdot\,,\,\cdot\,\rangle_{q_0}$ by $\sym(E(q_0))$ (i.e., the space of $k\times k$ symmetric matrices with real entries). The essential properties of $\pi$ are summarized in the following Theorem.

\begin{theorem}[\protect{\cite[Theorem $2.1$]{greilhuber2023arnolds}}]\label{thm: general genericity theorem}
Let $\mathcal{X}$ be a Fr\'{e}chet manifold, $H$ a Banach space, $\left\{\langle \,\cdot\,,\,\cdot\,\rangle_q\right\}_{q\in \mathcal{X}}$ a $C^1$ family of inner products on $H$, and $\{A_q\}_{q\in \mathcal{X}}$ a $C^1$ family of unbounded operators from some fixed dense domain $D\subset H$ to $H$, each self-adjoint with respect to $\langle \,\cdot\,,\,\cdot\,\rangle_q$ and with compact resolvent. Suppose $\lambda$ is an eigenvalue with multiplicity $k$ of $A_{q_0}$ for some $q_0 \in \mathcal X$.\par
Then, for $\varepsilon>0$ small enough, there exist an open neighborhood $\mathcal{U} (q_0)$ of $q_0$ and a $C^1$ function $\pi: \mathcal{U} (q_0) \to \sym(k,\mathbb R)$ satisfying
    \begin{align*}
        \sigma (A_{q})\cap (\lambda-\varepsilon,\lambda+\varepsilon) = \sigma (\pi (q))
    \end{align*}
for all $q\in \mathcal{U}(q_0)$, where $\sigma$ denotes the spectrum of the corresponding operator. In particular, $\pi^{-1} (\mathbb R \cdot \id )$ is the set of parameters $q\in \mathcal{U}(q_0)$ such that $\lambda (q)$ has multiplicity $k$. Furthermore the derivative of
$\pi$ at $q_0$ is given by
\begin{align*}
       \pi^\prime: T_{q_0}\mathcal{U}&(q_0) \to T_{\id } \sym(k,\mathbb R) \cong \sym(k,\mathbb R)\,, \\
       & h \mapsto  \big( \left\langle DA_{q_0} [h] u_m, u_\ell \right\rangle \big)_{m,\ell = 1}^k\,,
\end{align*}
where $\{u_m\}_{m=1}^k$ is an orthonormal basis of the $\lambda$-eigenspace of $A_{q_0}$.
\end{theorem}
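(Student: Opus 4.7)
The plan is to reduce the spectral problem near $\lambda$ to a finite-dimensional matrix-valued one by means of Riesz spectral projections, and then to symmetrize via a parametric Gram--Schmidt procedure. The $C^1$ (Hamilton-sense) dependence is preserved at every step, because composition and the taking of inverses preserves $C^1$-regularity in this framework.

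First I would set up the spectral projection. Since $A_{q_0}$ has compact resolvent, $\lambda$ is an isolated eigenvalue of finite multiplicity $k$. Choose a small positively oriented loop $\gamma\subset\mathbb{C}$ enclosing $\lambda$ and no other point of $\sigma(A_{q_0})$ and define
\[
P_\gamma(q) := \frac{1}{2\pi i}\oint_\gamma (z-A_q)^{-1}\,dz.
\]
Using that $\{A_q\}$ is $C^1$ together with the standard Neumann-series argument, $(q,z)\mapsto(z-A_q)^{-1}$ varies continuously on a neighborhood of $\{q_0\}\times\gamma$ in the bounded operator norm, so for $q$ in a small neighborhood $\mathcal{U}(q_0)$ no eigenvalue of $A_q$ crosses $\gamma$, the dimension of $E(q):=\Imag P_\gamma(q)$ equals $k$ throughout, and $P_\gamma$ is $C^1$ into $\mathcal{L}(H)$. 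Set $T(q) := P_\gamma(q)|_{E(q_0)}\colon E(q_0)\to E(q)$; since $T(q_0)=\id$, shrinking $\mathcal{U}(q_0)$ makes $T(q)$ invertible with $C^1$ inverse, by an inverse function argument for Banach space bijections.

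Next I would construct $\pi$ in two stages. Define
\[
\tilde\pi(q) := T(q)^{-1}\circ A_q\circ T(q)\in\End(E(q_0));
\]
this is a $C^1$ map, and because $A_q$ leaves $E(q)$ invariant and has no spectrum inside $\gamma$ outside $E(q)$, one has $\sigma(\tilde\pi(q))=\sigma(A_q)\cap(\lambda-\varepsilon,\lambda+\varepsilon)$ for some $\varepsilon>0$. In general $\tilde\pi(q)$ is not symmetric with respect to $\langle\cdot,\cdot\rangle_{q_0}$, so I would then start from a $\langle\cdot,\cdot\rangle_{q_0}$-orthonormal basis $\{u_m\}$ of $E(q_0)$, push it to $E(q)$ via $T(q)$, orthonormalize the image using $\langle\cdot,\cdot\rangle_q$, and pull back by $T(q)^{-1}$. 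This parametric Gram--Schmidt yields a $C^1$ family of bases $\{e_m(q)\}$ of $E(q_0)$ and hence a $C^1$ change-of-basis matrix $Q(q)$ with $Q(q_0)=\id$. Setting $\pi(q)$ to be the matrix of $\tilde\pi(q)$ in the basis $\{e_m(q)\}$, one has $\pi(q)\in\sym(k,\mathbb{R})$, since in this basis $\tilde\pi(q)$ represents the $\langle\cdot,\cdot\rangle_q$-self-adjoint operator $A_q|_{E(q)}$ in a $\langle\cdot,\cdot\rangle_q$-orthonormal basis. The spectral identity is preserved by similarity, and $\pi^{-1}(\mathbb{R}\cdot\id)$ is by construction the locus where $A_q$ has a single eigenvalue in the window with multiplicity $k$.

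Finally I would compute $\pi'(q_0)$ by direct differentiation. Since $T(q_0)=\id$ and $Q(q_0)=\id$, the derivative of $\pi$ at $q_0$ equals the projection to $E(q_0)$ of $D\tilde\pi(q_0)$. Differentiating $\tilde\pi(q)=T(q)^{-1}A_qT(q)$ at $q_0$ gives
\[
D\tilde\pi(q_0)[h]=DA_{q_0}[h]+[A_{q_0},DT(q_0)[h]],
\]
and the commutator with $A_{q_0}=\lambda\,\id$ on $E(q_0)$ vanishes upon restriction and projection to $E(q_0)$, so the matrix entries are precisely $\langle DA_{q_0}[h]u_m,u_\ell\rangle_{q_0}$. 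The first-order correction introduced by the Gram--Schmidt normalization is skew-symmetric (it is the derivative of a unitary change of frame) and drops out after one extracts the symmetric matrix $\pi$. I expect the main obstacle to be the careful bookkeeping in this parametric Gram--Schmidt step and the verification that compositions and inversions remain $C^1$ on the Fr\'echet manifold $\mathcal{X}$ in Hamilton's tame sense; once this framework is in place, both the spectral identity and the derivative formula follow from short computations.
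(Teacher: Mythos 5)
Your proposal is correct and follows essentially the same route as the paper's appendix (which itself defers the details to the cited reference): Riesz spectral projection over a contour around $\lambda$, the reduced operator $\tilde\pi(q)=T(q)^{-1}A_qT(q)$, a parametric Gram--Schmidt symmetrization, and the first-order computation of $\pi'$. One minor remark: the Gram--Schmidt correction drops out not because the change of frame is unitary (it need not be), but because any first-order change of basis contributes only a commutator with $\pi(q_0)=\lambda\,\id$, which vanishes — this does not affect the validity of your argument.
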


This result means that the map $\pi$ measures how the multiple eigenvalue $\lambda$ breaks up as $A_{q_0}$ is perturbed. Remarkably, a full understanding of $\pi$ gives a precise picture of this behaviour, not just one up to first order. Nevertheless it is of great interest to understand the derivative of $\pi$ at $q_0$. In Section~\ref{subsection: genericity} we have used the derivative of $\pi$ together with the following proposition to construct a submanifold containing all those parameters $q$ in some open neighborhood of $q_0$ for which the $k$-dimensional eigenspace corresponding to $\lambda$ is not broken up.

\begin{proposition}[\protect{\cite[Proposition $A.2$]{greilhuber2023arnolds}}]
    \label{proposition: submersion}
    Let $\mathcal U \subseteq \mathcal F$ be an open subset of a Fr\'{e}chet space $\mathcal F$. Suppose a $C^1$ map $F: \mathcal U \to \mathbb R^m$ is such that $DF(p): \mathcal F \to \mathbb R^m$ is surjective at a point $p \in \mathcal U \cap F^{-1}(0)$. Then there exists a neighborhood $\mathcal U' \subseteq \mathcal U$ of $p$, an open neighborhood $\mathcal V \subseteq \mathcal F'$ of the origin in some Fr\'{e}chet space $\mathcal F'$, and a $C^1$ map $G:\mathcal V \to \mathcal U'$ such that $F^{-1}(0) \cap \mathcal U'= G(\mathcal V)$.
\end{proposition}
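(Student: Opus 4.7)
The plan is to reduce the statement to the classical (finite-dimensional) implicit function theorem by using the surjectivity of $DF(p)$ to split $\mathcal F$ into the kernel of $DF(p)$ and a finite-dimensional complement, and then to solve for the finite-dimensional part as a $C^1$ function of the kernel variable via a contraction mapping. The key point is that although $\mathcal F$ is only a Fréchet space, the equation $F=0$ is finite-dimensional, so no Nash--Moser machinery is required.

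First I would set $\mathcal F' := \ker\bigl(DF(p)\bigr)$, which is a closed linear subspace of $\mathcal F$ and hence a Fréchet space in its own right. Since $DF(p):\mathcal F\to\mathbb R^m$ is surjective, I can pick vectors $e_1,\dots,e_m\in\mathcal F$ whose images under $DF(p)$ form a basis of $\mathbb R^m$; then $E:=\operatorname{span}(e_1,\dots,e_m)$ is a complement to $\mathcal F'$ in $\mathcal F$ and the map
\begin{equation*}
    \Phi:\mathcal F'\times\mathbb R^m\to\mathcal F,\qquad \Phi(x,t):=p+x+\sum_{i=1}^m t_i e_i,
\end{equation*}
is a $C^1$ diffeomorphism onto a neighborhood of $p$. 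Composing with $F$ gives a $C^1$ map (in the sense of Hamilton) $\tilde F:=F\circ\Phi$, with $\tilde F(0,0)=0$ and with partial derivative in $t$ at $(0,0)$ equal to $t\mapsto \sum_i t_i\,DF(p)e_i$, which is an isomorphism of $\mathbb R^m$ by construction.

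The next step is the parametric implicit function theorem. Let $L:=\partial_t\tilde F(0,0)^{-1}\in\operatorname{End}(\mathbb R^m)$ and consider the map $T_x(t):=t - L\,\tilde F(x,t)$ on a small closed ball $\overline B\subset\mathbb R^m$, parametrized by $x$ ranging in some neighborhood $\mathcal V\subset\mathcal F'$ of the origin. By joint continuity of $D\tilde F$, for $\mathcal V$ and $\overline B$ sufficiently small, $T_x$ is a contraction on $\overline B$ with a uniform constant and $T_x(\overline B)\subset \overline B$; consequently it has a unique fixed point $t(x)\in \overline B$, and by standard arguments for contraction maps with Fréchet parameters the assignment $x\mapsto t(x)$ is $C^1$ in Hamilton's sense, with derivative given by the implicit differentiation formula $Dt(x)h=-L\,D_x\tilde F(x,t(x))\,h$.

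Finally, I would define $G:\mathcal V\to\mathcal F$ by $G(x):=\Phi(x,t(x))=p+x+\sum_i t_i(x)e_i$ and set $\mathcal U':=\Phi(\mathcal V\times\operatorname{int}\overline B)$. The map $G$ is $C^1$, and the uniqueness in the fixed point argument shows that $F^{-1}(0)\cap\mathcal U'=G(\mathcal V)$. The main technical obstacle is verifying that the parameter-dependent contraction argument really does produce a Hamilton-$C^1$ fixed point in the Fréchet setting; the issue is that one cannot invoke the classical Banach-space implicit function theorem off the shelf, and one instead has to track the directional derivatives and their joint continuity through the contraction. Because the ambient variable $t$ lives in $\mathbb R^m$, however, all the delicate estimates are finite-dimensional, so the argument goes through essentially as in the Banach case.
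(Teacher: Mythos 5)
Your approach is correct and is, in substance, the one the paper has in mind. The paper does not reprove Proposition~\ref{proposition: submersion} but cites it from \cite{greilhuber2023arnolds}; however, the closing remark of the Appendix (``in this finite codimension case one can prove Proposition~\ref{proposition: submersion} and the associated implicit function theorem \cite[Lemma A.1]{greilhuber2023arnolds} without using Nash--Moser iteration'') pins down the intended strategy, and your proof realizes exactly that: split $\mathcal F$ topologically as $\ker DF(p)\oplus E$ with $\dim E=m$ (the projection onto $E$ along $\ker DF(p)$ is continuous because it factors through the continuous linear map $DF(p)$), and then solve the finite-dimensional equation $\tilde F(x,t)=0$ for $t\in\mathbb R^m$ by contraction, which is an implicit function theorem that requires no Nash--Moser smoothing.

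The one place you flag as a technical obstacle --- Hamilton-$C^1$ regularity of the fixed point $x\mapsto t(x)$ --- does go through, precisely because $t$ is finite-dimensional. Concretely, using Hamilton's fundamental theorem of calculus one writes $0=\tilde F(x+sh,t_s)-\tilde F(x,t_0)=s\int_0^1 D_x\tilde F(\cdot)h\,d\tau + A_s(t_s-t_0)$ with $A_s:=\int_0^1 \partial_t\tilde F(\cdot)\,d\tau$; invertibility of $A_s$ for small $s$ (joint continuity of $D\tilde F$ plus matrix inversion) gives the quotient $\frac{t_s-t_0}{s}\to -[\partial_t\tilde F(x,t(x))]^{-1}D_x\tilde F(x,t(x))h$, and the resulting derivative is linear in $h$ and jointly continuous in $(x,h)$ since $t$ is continuous and inversion of matrices is continuous. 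Two minor points worth recording: (i) you should shrink $\mathcal V$ once more so that $t(\mathcal V)\subset\operatorname{int}\overline B$, ensuring $G(\mathcal V)\subset\mathcal U'$; and (ii) $\Phi$ is in fact an affine isomorphism onto all of $\mathcal F$, so ``diffeomorphism onto a neighborhood of $p$'' should read ``restricts to a diffeomorphism from a neighborhood of $(0,0)$ onto a neighborhood of $p$ inside $\mathcal U$.'' Neither affects the argument.
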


Proposition \ref{proposition: submersion} implies that preimages of regular values of $C^1$ maps from Fr\'{e}chet manifolds to $\mathbb R^m$ are submanifolds of codimension $m$. It is noteworthy that in this finite codimension case one can prove Proposition \ref{proposition: submersion} and the associated implicit function theorem \cite[Lemma A.1]{greilhuber2023arnolds} without using Nash--Moser iteration.

\bibliographystyle{amsplain}

\end{document}